\newcommand{\xc}[1]{}
\newcommand{\IR}{\mathbb{R}}
\newcommand{\IC}{\mathbb{C}}
\newcommand{\IF}{\mathbb{F}}
\newcommand{\QL}{{\sf QL}}
\newcommand{\lat}{{\sf L}}
\newtheorem{theorem}{Theorem}
\begin{document}

\title{Test sets for tautologies in modular quantum logic}
\author{Christian Herrmann}
\address{C. Herrmann, Technische Universit\"{a}t Darmstadt FB4, Schlo{\ss}gartenstr. 7, 64289 Darmstadt, Germany}
\email{herrmann@mathematik.tu-darmstadt.de}

\date{}

\maketitle

\keywords{Quantum logic; modular ortholattice;
continuous geometry;  universal test set}

\begin{abstract}
As defined by Dunn, Moss, and Wang,
an universal test set in an ortholattice $L$
is a subset $T$ such that each term takes value $1$, only,
if it does so under all substitutions from $T$. 
Generalizing their result for ortholattices of
subspaces of finite dimensional Hilbert spaces,
we show
 that no infinite modular ortholattice of finite dimension
admits a  finite universal test
set. On the other hand, answering a question of the same
authors,
 we  provide a countable universal test set
for the ortholattice of projections of any
type II$_1$ von Neumann algebra factor as well as
for von Neumann's 
algebraic construction of a continuous geometry.
These universal test sets consist of
elements having rational normalized dimension with
denominator a power of $2$.
\end{abstract}

\section{Introduction}
In their seminal paper, 
Birkhoff and von Neumann \cite{BirkhoffVonNeumann},
suggested  ortholattices $\lat(H)$ of closed subspaces
of Hilbert spaces $H$ as structures to 
deal with uncertainity features of quantum mechanics,
the set $\QL(L)$ of \emph{tautologies} of an ortholattice $L$
consisting of all terms $t(\bar x)$ such that
$t(\bar a)=1$ for every substitution in $L$.
In particular, towards  an axiomatization,
they showed that, for fixed finite  $d\geq 4$,
the ortholattices $\lat(H)$ of subspaces of $d$-dimensional
anisotropic inner products spaces  $H$ of $\dim H=d$
are, up to isomorphism, the directly irreducible
$d$-dimensional ortholattices satisfying Dedekind's modular law.
Concerning infinite dimenson,
von Neumann suggested
to consider  certain continuous geometries. These are modular, too.

In contrast, ortholattices
of all subspaces of infinite dimensional Hilbert spaces
satisfy the orthomodular law, only, which became
  focus of Quantum Logic.
Interest in the modular case  was renewed
in
the ``third life'' of quantum logic, inspired by quantum computing:
In \cite{Hagge1}, Dunn, Hagge, Moss, and Wang 
discussed 
quantum logic tautologies  of $\lat(H)$ where 
$H$ is a finite dimensional Hilbert space.
 They derived 
decidability of tautologies, in fixed dimension,
from Tarski's decision procedure
for elementary algebra and geometry
and discussed how tautologies relate to dimensions.
In the abstract setting  of  modular ortholattices
the latter question has been studied by Giuntini, Freytes, and
Sergioli \cite{frey}.

In \cite{dunn}, Dunn, Moss, and Wang introduced
the concept of a
\emph{universal  test set} for an ortholattice $L$:
A subset $T$ of $L$ such that
$t(\bar x) \in \QL(L)$
if and only if $t(\bar a)=1$ for every substitution
$\bar a$ within $T$. They showed that
$\lat(H)$  does not admit a finite universal test set
if $2 \leq \dim H <\infty$ and posed the problem
to establish a universal test set
 for the  ortholattice of projections of the hyperfinite
type II$_1$ von Neumann algebra factor.

The purpose of the present note is
to show non-existence of finite universal test
sets for any infinite modular ortholattice of finite dimension
and to provide countable  universal test sets
for certain continuous geometries:
The ortholattices of projections of 
type II$_1$ von Neumann algebra factors and
 von Neumann's \cite{neu3}
algebraic construction of a continuous geometry
(which is not isomorphic to any of the former,
according to von Neumann \cite{neu2}).
These universal test sets consist of
elements having rational normalized dimension with
denominator a power of $2$.
Observe that $\QL(L)$ is decidable for each of these
continuous geometries $L$ \cite{hn,hard}.

\section{Test sets in MOLs of finite dimension}
An  \emph{ortholattice}
is a  lattice  with bounds $0,1$ and an orthocomplementation
$x \mapsto x'$, that is an order reversing involution
such that $x\wedge x'=0$ and $x \vee x'=1$ (cf.
Section 1.2 of \cite{dunn}).
A lattice or ortholattice $L$ 
 is \emph{modular} if  $x \geq z$ implies $x\wedge(y \vee z)
= (x \wedge y) \vee z$ .
Such  $L$ is of finite \emph{dimension}
 $d$ if some/any maximal chain in $L$ has $d+1$ elements,
we write $d=d(L)$.
We also use MOL for ''modular ortholattice''. 
Any interval $[b,c]$ of a MOL $L$ is
a MOL with the \emph{relative orthocomplement}
$x \mapsto (x'\wedge c)\vee b$.
Observe that $[b,c]$ is isomorphic to $[0,a]$, where
$a= b'\wedge c$, and that the latter is a homomorphic image of the
sub-ortholattice $[0,a] \cup [a',1]$ of $L$. Thus,
any tautology of $L$ is also one of $[b,c]$.
Also recall that with any ortholattice idenity $t_1=t_2$
one can associate a term $t$ such that, within any MOL,
$t_1=t_2$ is equivalent to $t=1$.

\begin{theorem}
An infinite MOL of finite dimension does
not admit a finite universal test set.
\end{theorem}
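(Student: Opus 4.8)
The plan is to show that no finite $T\subseteq L$ can serve as a universal test set by producing, for each such $T$, a term $t$ with $t(\bar a)=1$ for every substitution $\bar a$ from $T$ while $t\notin\QL(L)$. By the remark preceding the statement it suffices to exhibit an ortholattice identity $t_1=t_2$ that holds at every tuple of elements of the finite set $T$ but fails at some tuple in $L$; the associated term $t$, for which $t=1$ is equivalent to $t_1=t_2$ in any MOL, then has the required properties, since both the evaluation at $T$-tuples and the failure in $L$ are computed with the operations of $L$ itself, including its genuine orthocomplement $x\mapsto x'$. The point to exploit is that, because $T$ is finite, ``holding at every tuple from $T$'' amounts to only finitely many equalities, so I am free to tailor the identity to the particular configuration $T$ rather than to any variety it generates.

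First I would expose the infinite parameter hidden in $L$. Since an ortholattice is complemented, $L$ is a complemented modular lattice of finite height $d=d(L)$; such a lattice is atomistic, so if it had only finitely many atoms it would be finite. Hence $L$ has infinitely many atoms. Fixing a maximal orthogonal family of atoms $a_1,\dots,a_d$ (an orthogonal frame, which exists because the dimension function sums to $d$), the infinitely many atoms $p$ occupy infinitely many distinct ``positions'' relative to this fixed frame, positions recorded by the meets and joins $p\wedge(a_i\vee a_j)$, $p\vee a_i$, and the orthocomplements $p'$. Passing, if convenient, to an irreducible direct factor and invoking von Neumann's coordinatization in dimension $\geq 4$ (with the projective line and projective plane treated separately in dimensions $2$ and $3$), this infinitude is identified with the infinitude of a coordinate division ring $R$: since the height is finite while $L$ is infinite, $R$ must be infinite.

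The heart of the argument is to turn this infinitude into the separating identity, working throughout with the operations of $L$. The finitely many tuples drawn from $T$ determine, relative to the fixed orthogonal frame, only finitely many coordinate data $c_1,\dots,c_N$ in the infinite parameter set. Using the lattice operations together with $'$ one interprets in the frame the graphs of addition and multiplication of $R$, in the manner of von Neumann's $L$-numbers, the orthocomplement supplying the rigidity that pins the frame down; this yields ortholattice terms whose value encodes the coordinate $\xi(\bar x)$ of the configuration presented by the variables. One then writes an identity expressing, in effect, $\prod_{i=1}^{N}(\xi-c_i)=0$: it forces the value $1$ whenever $\xi\in\{c_1,\dots,c_N\}$, in particular for every substitution from $T$, whereas a coordinate value $\lambda$ distinct from $c_1,\dots,c_N$ (which exists because $R$ is infinite) is realized by some atom of $L$ and makes the identity fail. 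This produces the desired term $t$.

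The step I expect to be the main obstacle is precisely this construction and its verification: expressing the arithmetic of $R$, and hence the product-of-linear-factors condition, purely by ortholattice terms in $L$ using the true orthocomplement, and—most delicately—ensuring that the resulting identity holds for \emph{all} substitutions from $T$, including the degenerate ones in which the frame collapses, variables coincide, or variables take the values $0$ or $1$. The remedy I would pursue is to design the terms so that any degeneration of the frame forces the value $1$, so that the identity holds vacuously there and only the finitely many genuine coordinates $c_1,\dots,c_N$ remain to be accounted for. Once this bookkeeping is arranged, the modular dimension computation together with the infinitude of $R$ completes the proof.
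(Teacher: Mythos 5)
There is a genuine gap at the heart of your construction. The identity you propose, $\prod_{i=1}^{N}(\xi-c_i)=0$, mentions the specific coordinates $c_1,\dots,c_N$ that the tuples from $T$ happen to realize relative to your frame. But a universal test set must be defeated by an ortholattice \emph{term}, i.e.\ an expression in variables and the operations $\wedge,\vee,{}'$ only; neither the elements $c_i$ of the coordinate ring $R$ nor your ``fixed maximal orthogonal family of atoms'' are term-definable constants, so the displayed identity is not an ortholattice identity at all. Nor can you smuggle the $c_i$ in through auxiliary variables: in the definition of a universal test set every variable ranges over all of $T$, so auxiliary variables cannot be pinned to the particular elements whose coordinates are $c_1,\dots,c_N$. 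The only structural difference between ``all substitutions from a set of size $k$'' and ``all substitutions'' that a term can detect is the pigeonhole principle: among $k+1$ variables, two must receive the same value. That is the idea your proposal is missing, and it is exactly what the paper exploits: the term $t^d_n$ has $n>|T|$ variables $x_1,\dots,x_n$, derives from each a point $p_i$ on a line of a frame, and asserts in effect that $p_i=p_j$ for some $i<j$ --- automatically true on $T$-substitutions by pigeonhole, but refutable in an infinite $L$ by choosing $n$ distinct atoms on a line.

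Once the argument is set up this symmetric way, your detour through von Neumann coordinatization and the arithmetic of $R$ becomes unnecessary (one only needs ``some pair of derived atoms coincide'', expressible as $\bigvee_{i<j}(p_i\wedge p_j)$ joined with a suitable coatom), and this also repairs two further soft spots in your sketch: irreducible MOLs of dimension $3$ are projective planes that need not be Desarguesian, so no coordinate ring is available there, and the frame cannot be ``fixed'' in $L$ but must be produced from extra variables by Huhn's frame terms, with the term arranged to evaluate to $1$ whenever the produced frame degenerates --- precisely the obstacle you flag at the end but do not resolve. Finally, the reduction to an infinite directly irreducible factor of the right dimension is not automatic; the paper takes a factor of maximal dimension $d\geq 2$ and invokes Baer's theorem on polarities of finite projective planes to see it is infinite when $d>2$.
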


\begin{proof}
First, we
consider a directly  irreducible MOL  $L$ of $\dim L=d <\infty$.
Recall that the lattice $L$ is isomorphic to the
subspace lattice of an irreducible $d-1$-dimensional
projective space.
In particular, $L$ is finite if so is some of its
intervals $[0,a]$ of dimension $2$.
 Coordinate systems, that is
$d+1$ points in general position, correspond
to non-trival $d$-frames in $L$.

 Here, a
$d$-\emph{frame} $\bar a$ in a modular lattice $M$
is given by $a_0,\ldots ,a_d,a_\bot,a_\top$ in $M$
such that $a_\top=\bigvee_{i=0}^d a_i$ and
$a_\bot =a_j\wedge \bigvee_{i\neq j} a_i$ for all $j$.
A $d$-frame $\bar a$ is \emph{trivial} if $a_\bot=a_\top$;
otherwise, any set $\{a_i\mid i\neq j\}$ is independent.
In particular, any $d$-frame in $M$ of $\dim M<d$ has
to be trivial.

 Huhn \cite{huhn}
has provided a tuplet $\bar a^d(\bar z^d)$ of lattice terms
which yields a $d$-frame $\bar a^d(\bar b)$
 for every substituion $\bar b$ in a modular lattice and
such that $\bar a^d(\bar b)=\bar b$ if $\bar b$ is a $d$-frame.
For the equivalent concept of von Neumann normalized frames
of order $d$ the analoguous result has been obtained by Freese \cite{freese}.

Now, observe that there is an ortholattice term
$s(y_0,y_1,y_2,y_3)$  such that for any $c_1,c_2 \in [c_0,c_3]$
in a MOL one has $t(\bar c)$
a complement of $c_2$ in $[c_0,c_3]$
and such that $s(\bar c)=c_1$ if, in addition, 
$c_1$ is a complement of $c_2$ in $[c_0,c_3]$.
Namely, choosing $d_1$ the relative orthocomplement 
of $c_1 \wedge c_2$ in $[c_0,c_1]$ 
one obtains a complement of $c_2$ in $[c_0,c_3]$
by  the relative orthocomplement of $c_1\vee c_2$
in $[d_1, c_3]$.

Finally, put $\bar x_n=(x_1,\ldots,x_n)$, where $x_1,x_2, \ldots$
 are  pairwise distinct variables,
 and define
\[ \hat{x}^d_i:=  s(z^d_\bot, (x_i\wedge(z^d_0\vee z^d_1))\vee z_\bot, z^d_1,z^d_0\vee z^d_1)
\]
\[t^d_n(\bar z,\bar x_n):= (a^d_\top)'\vee \bigvee_{k=1}^{d-1}z^d_k \vee  \bigvee_{1\leq i<j\leq n}
(\hat{x}^d_i \wedge \hat{x}^d_j).   \]

Observe that $z^d_\bot \leq \hat{x}^d_i \leq z^d_0\vee z^d_1$
holds in any MOL $M$. Thus, 
given a substitution $\bar b,\bar c$
one has $t^d_n(\bar b,\bar c)=1$ if the $d$-frame $\bar a:=\bar a^d(\bar b)$
is trivial. Otherwise,
 if in addition $M$ is directly irreducible and $\dim M=d$, 
then $a_\bot=0$, $a_\top=1$, and
 the $a_k$ and $p_i:=\hat{x}^d_i(\bar b,\bar c)$ are atoms of $M$,
the latter also complements of $a_1$ in $[0,a_0\vee a_1]$.
Therefore,  $t^d_n(\bar b,\bar c)=1$ if and only if
$p_i=p_j$ for some $i<j$. It  follows that  $t^d_n(\bar b,\bar c)=1$
for all substitutions in a set $T$ with $|T|<n$.
On the other hand, if $M$ is infinite  
then there is  a substitution such that
$t^d_n(\bar b,\bar c)<1$, namely $\bar b$ some non-trivial $d$-frame
and the $c_i$ pairwise distinct atoms in $[0,b_0\vee b_1]$,
$c_i \neq b_1$.

Now, consider any infinite MOL $L$
of finite dimension.
 Then, up to isomorphism,
$L$ is a direct product of finitely many directly irreducibles.
Let $L_0$ be a factor of maximal dimension $d$.
One has $d \geq 2$ since  $d=1$ would imply
that  $L$ is distributive and finite.
If $d=2$, all factors are of dimension $\leq 2$ and
we my assume $L_0$ infinite.  If $d>2$  
then $L_0$ is infinite due to \cite{baer}.
As observed, above,  for each $n$ there is an assignment in $L_0$,
such that $t^d_n$ does not evaluate to $1$. It follows
that $t^d_n$ is not a tautology of $L$. Now, given any finite
$T \subseteq L$, $t^d_n$ with $n>|T|$ witnesses that
$T$ is not a universal test set.
\end{proof}

Within modular lattices, the identity $a^d_\bot=a^d_\top$
is equivalent to Huhn's $d-1$-distributive law.
Given $n<\infty$, with
 \cite[Theorem 2.12]{jon} 
it follows that a subdirectly irreducible MOL $L$ is
 of $\dim L\leq n$   if and only if
$L$ is $n$-distributive -- whence  of $\dim L =n$ if and only if,
in addition, $L$ is not $n-1$-distributive.
In particular, this applies 
to directly irreducible complete  atomic  MOLs,
cf. \cite[Theorem 4.10]{frey}.

\section{Test sets in continuous geometries}

Recalling von Neumann's \cite{neu3} algebraic construction of
continuous geometries,
let $F$ be one of the following fields with conjugation as involution:
$\IC$, $\IR$, and $\IF$, the real algebraic numbers.
Let the vector space $F^d$ be endowed with the canonical scalar
product and the
inner product spaces  $V_n(F)$ recursively defined by $V_0(F)=F^1$ and
  $V_{n+1}(F)=V_n(F) \oplus^\perp V_n(F)$.
For an anisotropic inner product space $V$
of $\dim V<\infty$, denote by $\lat(V)$
its MOL of subspaces.
Then  $\lat(V_n(F))$
 embeds into $\lat(V_{n+1}(F))$ via $U\mapsto U\oplus U$
and one may form the direct limit $\lat_\infty(F)$.
The latter admits a normalized dimension function
giving rise to a metric.
The MOL
 ${\sf CG}(F)$  is the metric completion of
$\lat_{\infty}(F)$ cf. \cite{hard}.
For a type II$_1$ von Neumann algebra factor $A$
let $\lat(A)$ denote the MOL of all projections
in $A$, cf. \cite{holl}.

\begin{theorem}
Let $L=\sf{CG}(\IC)$ or $L=\lat(A)$ for a type II$_1$ von Neumann
algebra factor. Then $\lat_\infty(\IF)$ embeds into
$L$ such that the image is a universal test set for $L$ 
and consist of elemens having rational normalized dimension with
denominator a power of $2$.
In particular, $\QL(L)=\QL(\lat_\infty(\IF))$.
\end{theorem}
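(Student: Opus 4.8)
The plan is to prove, in both cases, that there is a sub-ortholattice embedding $\lat_\infty(\IF)\hookrightarrow L$ whose image $T$ is a universal test set; since $T$ is a sub-ortholattice, the defining property of a universal test set is by definition the equality $\QL(L)=\QL(\lat_\infty(\IF))$, which I would prove as two inclusions. First I would construct the embeddings. As $\IF\subseteq\IR$ carries the trivial involution, an $\IF$-orthonormal basis adapted to $U\subseteq\IF^{2^n}$ and to $U^\perp$ stays $\IC$-orthonormal after scalar extension; hence $U\mapsto U\otimes_\IF\IC$ embeds each $\lat(V_n(\IF))$ as a sub-ortholattice of $\lat(V_n(\IC))$, compatibly with the doubling maps $U\mapsto U\oplus U$, and in the limit gives $\lat_\infty(\IF)\hookrightarrow\lat_\infty(\IC)$. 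For $L={\sf CG}(\IC)$ I compose with the canonical dense embedding $\lat_\infty(\IC)\hookrightarrow{\sf CG}(\IC)$. For $L=\lat(A)$ I use a standard tower of unital matrix subalgebras $M_{2^n}(\IC)\subseteq A$ whose standard inclusions induce the doubling maps and preserve the normalized trace: the projection lattice of $M_{2^n}(\IC)$ is $\lat(V_n(\IC))$, and a unital $*$-homomorphism preserves orthocomplements and the meet and join of projections, so one obtains $\lat_\infty(\IC)\hookrightarrow\lat(A)$. In either case the image of $\lat_\infty(\IF)$ consists of elements of normalized dimension $k/2^n$, and restricting substitutions to a sub-ortholattice gives at once $\QL(L)\subseteq\QL(\lat_\infty(\IF))$.

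For the reverse inclusion $\QL(\lat_\infty(\IF))\subseteq\QL(L)$ I would argue contrapositively: a term $t$ failing somewhere in $L$ must already fail in $\lat_\infty(\IF)$. Granting for the moment that every failure in $L$ is reflected in some finite-dimensional complex subspace ortholattice $\lat(V)$, $V$ over $\IC$ (the step discussed below), I would transfer such a complex failure to $\IF$ in two moves. Regarding $V$ as a real inner product space $V_\IR$ of doubled dimension, each complex subspace is a $J$-invariant real subspace, and for such subspaces the Hermitian and the real orthocomplement coincide; hence the complex subspaces form a sub-ortholattice of $\lat(V_\IR)$, and the complex failure becomes a failure over $\IR$. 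That failure is the solvability of a semialgebraic system with rational coefficients (finitely many real subspaces with the prescribed incidences, together with a vector witnessing $t(\bar x)\neq 1$); since $\IF$ is real closed and thus an elementary substructure of $\IR$, the system already has a solution over $\IF$, giving a failure in some finite-dimensional $\lat(V'(\IF))$. As $\lat(V'(\IF))$ is isomorphic to an interval of a stage $\lat(V_n(\IF))$, which is a homomorphic image of a sub-ortholattice thereof, the failure lifts to $\lat(V_n(\IF))$ and hence to $\lat_\infty(\IF)$. This is the Tarski--Seidenberg transfer already exploited in \cite{Hagge1}.

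It thus remains to reflect an arbitrary failure in $L$ into a finite-dimensional complex $\lat(V)$. For $L={\sf CG}(\IC)$ this is clean and I would do it by density: ${\sf CG}(\IC)$ is a continuous geometry, so meet, join and orthocomplement, and therefore the term map $t$, are continuous for the dimension metric; the condition $t(\bar a)\neq 1$, i.e. of normalized dimension below $1$, is open, while $\lat_\infty(\IC)$ is metrically dense, so any failure at $\bar a\in{\sf CG}(\IC)$ is witnessed by a nearby $\bar a'$ lying in some finite stage $\lat(V_n(\IC))$. For $L=\lat(A)$ this reflection is the heart of the matter and the step I expect to be the main obstacle. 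A tower of matrix subalgebras need not be metrically dense in a non-hyperfinite factor -- indeed a type II$_1$ factor has no finite-dimensional representations, so finitely many projections need not share a finite-dimensional reducing subspace and cannot in general be approximated by projections in a finite-dimensional subalgebra -- so a density argument is unavailable. I would instead proceed through von Neumann's coordinatization: $\lat(A)$ is an irreducible type II$_1$ continuous geometry, coordinatized by a rank-complete $*$-regular $\IC$-algebra carrying the trace-induced rank, and I would show that a lattice-term failure there forces the corresponding finite-dimensional incidence to be realizable over $\IC$, exploiting the order-continuity of the dimension function and the rank-metric semicontinuity of meet and join. This reduction of the equational behaviour of $\lat(A)$ to finite dimensions is precisely what underlies the decidability results \cite{hn,hard}, which I would invoke to finish; combined with the transfer of the previous paragraph it yields $\QL(\lat_\infty(\IF))\subseteq\QL(\lat(A))$, and hence the theorem.
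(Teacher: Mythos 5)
Your overall architecture coincides with the paper's: both arguments establish the sandwich $\QL(\lat_\infty(\IF)) \subseteq \bigcap_{n}\QL(\lat(\IC^n)) \subseteq \QL(L) \subseteq \QL(\lat_\infty(\IF))$, with the first inclusion obtained by the same chain (realification of $\lat(\IC^k)$ as a sub-ortholattice of $\lat(\IR^{2k})$, transfer between the real closed fields $\IF$ and $\IR$, and the interval/homomorphic-image trick to climb back to a power-of-two stage), the middle inclusion delegated to \cite{hn,hard} exactly as the paper does, and the last inclusion read off from an explicit embedding whose image has dimensions $r/2^n$. Your density-plus-continuity argument for ${\sf CG}(\IC)$ is the intended one, and your honest assessment that the case $L=\lat(A)$ cannot be handled by density and must rest on \cite{hn} matches the paper, which simply cites that result; your sketch of how it is proved there is descriptive rather than a proof, but the paper operates at the same level.

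The one place you genuinely diverge is the construction of the embedding into $\lat(A)$. You take a tower of unital matrix subalgebras $M_{2^n}(\IC)\subseteq A$ and justify that the inclusions are ortholattice embeddings by asserting that ``a unital $*$-homomorphism preserves orthocomplements and the meet and join of projections.'' As a general principle this is false: meets and joins of projections are not algebraic operations in an arbitrary $*$-ring, which is precisely why the paper routes the argument through $*$-regular rings, where they are terms in the Moore--Penrose pseudo-inverse, and through the $*$-regular extension ${\sf U}(A)$ of $A$ having the same projections; there a $*$-ring embedding $M_\infty(\IF)\to{\sf U}(A)$ automatically restricts to an ortholattice embedding. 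Your route can be repaired without that machinery -- for a finite-dimensional $*$-subalgebra the meet $e\wedge f$ computed in $A$ is the spectral projection of $efe$ at $1$, hence a polynomial in $efe$ and therefore an element of the subalgebra agreeing with the meet computed there -- but as written this step is unjustified and is the only real soft spot in an otherwise correct proposal. A side benefit of the paper's route is that it never needs to exhibit a matrix tower inside $A$ directly: it imports the copy of $M_\infty(\IC)$ from the hyperfinite subfactor via \cite{mn}.
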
 
\begin{proof}
We begin with some observations.
 $\lat(V_n(\IF))$ is isomorphic to $\lat(\IF^{2^n})$.
$\QL(\lat(\IF^{2^n})) \subseteq \QL(\lat(\IF^m))$ if $m\leq 2^n$ 
since $\lat(\IF^m)$ is isomorphic to an interval of $\lat(\IF^{2^n})$.
$\QL(\lat(\IF^m))=\QL(\lat(\IR^m))$ since $\IF$ and  
$\IR$ are elementarily equivalent and since the ortholattices
can be interpreted within $\IF$ and $\IR$, respectively.
Finally, $\QL(\lat(\IR^m)) \subseteq \QL(\lat(\IC^k))$ 
 if $m=2k$ since $\lat(\IC^k)$ embeds into $\lat(\IR^m)$ 
considering $\IC^k$ the complexification of $\IR^k$. 
It follows that
 $\QL(\lat_\infty(\IF^n))\subseteq \bigcap_{n< \infty} \QL(\lat(\IC^n))$.

On the other hand one has $\bigcap_{n<\infty} \QL(\lat(\IC^n)) \subseteq \QL(L)$ by \cite{hard,hn}. Thus, given an embedding $\varepsilon:\lat_\infty(\IF^n))
\to L$, one can conclude  that
 $\QL(L)=\QL(\lat_\infty(\IF))$ whence ${\sf im} \varepsilon$
is a universal test set for $L$; moreover, the
normalized  dimension function $\delta$ of $L$
restricts to one on $\varepsilon(\lat(V_n(\IF)))$
and elements $x$
 of the latter have $\delta(x)=\frac{r}{2^n}$
with integer $r$, $0\leq r \leq 2^n$.

In case ${\sf CG}(\IF)$, the embedding $\varepsilon$
results from the embeddings $U \mapsto U\otimes \IC$
of $\lat(V_n(\IF))$ into $\lat(V_n(\IC))$. It remains to
establish $\varepsilon$ in case $L=\lat(A)$.

Recall, e.g. from \cite{berb2}, the notion of $*$-regular ring: A ring $R$ with
unit, endowed with an involution $x \mapsto x^*$
such that $xx^*=0$ only for $x=0$, and
such that for any $a$ there is $x$ with $a=axa$.
A $*$-regular ring $R$ admits a unique function $x \mapsto x^+$
satisfying the equations axiomatizing Moore-Penrose-Rickart
pseudo-inverse. In particular, 
the pseudo-inverse on a $*$-regular sub-$*$-ring $S$
is that inherited from $R$. 

For $F$ as aoove, the $m\times m$-matrices  
over $F$ form a $*$-regular ring $M_m(F)$ where $X^*$ is
the conjugate transpose. Observe that $M_m(F)$ embeds
into $M_{2m}(F)$ mapping $X$ onto the
block diagonal  matrix having $2$ diagonal blocks $X$.
The direct limit $M_\infty(F)$ of the $M_{2^n}(F)$, $n\to \infty$,
 is also $*$-regular
and the embeddings $M_{2^n}(\IF) \to M_{2^n}(\IC)$
result into an embedding $\iota:M_\infty(\IF)\to M_\infty(\IC)$.

The projections of a $*$-regular ring $R$ form a
MOL $\lat(R)$, ordered by $e \leq f \Leftrightarrow fe=e$,
 the fundamental  operations of which
are defined by terms in the language of $*$-rings with
pseudo-inversion. Thus, any embedding $\iota:S\to R$
 of $*$-regular ring restricts to an embedding
$\lat(S) \to \lat(R)$. Therefore, the
isomorphisms $\lat(\IF^{2^n})\to \lat(M_{2^n}(\IF)$ 
result into an isomorphism $\omega:\lat_\infty(\IF) \to 
\lat(M_\infty(\IF))$.

Recall from \cite[Definition 4.1.1]{mn} the notion of  \emph{approximately finite}
type II$_1$ factor. All these are isomorphic
as $*$-rings \cite[Theorem XII]{mn} and each
type II$_1$ factor contains one of them
as sub-$*$-ring \cite[Theorem XIII]{mn}. 
Finally,
by \cite[Lemma 4.1.1]{mn}, the type $[2^n, n \to \infty]$
approximately finite type II$_1$ factor
contains a  sub-$*$-ring isomorphic to  $M_\infty(\IC)$.
Thus any type II$_1$ factor $A$ contains a 
sub-$*$-ring  isomorphic to $M_\infty(\IF)$.

On the other hand, according to \cite[Theorem XV]{mn1}
 any  type II$_1$ von Neumann algebra factor $A$ extends to
a $*$-ring ${\sf U}(A)$ having the same projections, that is
 $\lat(A)=\lat({\sf U}(A))$;  moreover,
${\sf U}(A)$ is  $*$-regular \cite[Part II, Ch.II, App.2(IV)]{neu}
and \cite[p.191]{neuc}; cf. \cite{berb} and \cite[Theorem 4.2]{hn}.
Now,  $M_\infty(\IF)$ embeds into $A$ whence into ${\sf U}(A)$
and it follows that $\lat_\infty(F)$ embeds into 
$\lat(A)$.
\end{proof}
To see that ${\sf CG}(\IC)$
is not isomorphic to any $\lat(A)$, $A$ a type II$_1$ factor,
observe that a lattice isomorphism would induce
a ring isomorphism of the ring constructed in \cite{neu2}
onto ${\sf U}(A)$, contradicting von Neumann'a result,
cf. the introduction to \cite{neu}.

\end{document}